\newtheorem{theorem}{Theorem}[section]
\newtheorem{lemma}[theorem]{Lemma}
\newtheorem{corollary}[theorem]{Corollary}
\newtheorem{proposition}[theorem]{Proposition}
\theoremstyle{definition}
\newtheorem{remark}[theorem]{Remark}
\newtheorem{question}[theorem]{Question}
\newtheorem{example}[theorem]{Example}
\newtheorem*{notation}{Notation}
\newcommand{\af}{\mathfrak{a}}
\newcommand{\KK}{\mathbb{K}}
\newcommand{\NN}{\mathbb{N}}
\newcommand{\ZZ}{\mathbb{Z}}
\newcommand{\QQ}{\mathbb{Q}}
\newcommand{\FF}{\mathbb{F}}
\newcommand{\set}[1]{\{#1\}}
\newcommand{\with}{\,\colon\,}
\newcommand{\restr}[2]{{#1}|_{#2}}
\DeclareMathOperator{\Tor}{Tor}
\DeclareMathOperator{\Char}{char}
\DeclareMathOperator{\reg}{reg}
\newcommand{\Hi}[2]{\tilde{H}_{#1}({#2};\KK)}
\let\phi\varphi
\begin{document}

\title[Golod property and Characteristic]{The Golod property for Stanley-Reisner rings in varying characteristic}

\author{Lukas Katth\"an}

\address{Universit\"at Osnabr\"uck, FB Mathematik/Informatik, 49069 Osnabr\"uck, Germany}
\email{lukas.katthaen@uos.de}

\subjclass[2010]{Primary: 05E40; Secondary: 13D02,13F55.}
% 05E40 Combinatorial aspects of commutative algebra
% 16W50 Graded rings and modules
% 13D02 Syzygies and resolutions (Homological methods /  Commutative rings and algebras)
% 13F55 Face and Stanley-Reisner rings; simplicial complexes
% 55U10 Simplicial sets and complexes (Applied homological algebra and category theory / Algebraic topology)

\keywords{Golod Ring; Stanley-Reisner ring; Monomial ideal; Characteristic.}

\begin{abstract}
	We show that the Golod property of a Stanley-Reisner ring can depend on the characteristic of the base field.
	More precisely, for every finite set $T$ of prime numbers we construct simplicial complexes $\Delta$ and $\Gamma$, such that $\KK[\Delta]$ is Golod exactly in the characteristics in $T$ and $\KK[\Gamma]$ is Golod exactly in the characteristics not in $T$.
	
	Along the way, we show that a one-dimensional simplicial complex is Golod if and only if it is chordal.
\end{abstract}

\maketitle

\section{Introduction}
Let $\KK$ be a field and $S = \KK[X_1, \dotsc, X_n]$ be the polynomial ring, endowed with the standard $\ZZ$-grading. 
For a homogeneous ideal $\af$, let $A := S/\af$  be the quotient ring.
As $A$ is a standard-graded algebra, its $\Tor$-algebra $\Tor^A_*(\KK,\KK)$ inherits a $\ZZ$-grading.
The \emph{Poincar\'e-series} of $A$ is the formal power series
\[ P_A(t,x) = \sum_{i, j \geq 0} (\dim_\KK \Tor^A_i(\KK,\KK)_j) t^i x^j, \]
where $\Tor^A_i(\KK,\KK)_j$ denotes the homogeneous component of $\Tor^A_i(\KK,\KK)$ in degree $j$.
The algebra $A$ is called \emph{Golod} if the following holds:
\begin{equation}\label{eq:ps}
	P_A(t, x) = \frac{(1+t x)^n}{1-\sum_{i\geq 1}\sum_{j \geq 0} \dim_\KK (\Tor^S_i(A,\KK)_j) t^{i+1} x^j}
\end{equation}
In general, $P_A(t, x)$ is componentwise bounded above by the right-hand side of \eqref{eq:ps}, as it was shown by Serre.

Golod algebras are surprisingly common.
For example, it has been proven by Herzog and Huneke \cite{HH} that if $\af \subseteq S$ is a homogeneous ideal, then $S/\af^k$ is Golod for every $k > 1$.
Further, Herzog, Welker and Reiner showed in \cite{HRW} that $S/\af$ is Golod if $\af$ has a componentwise linear resolution.
We refer the reader to the survey article \cite{Avra} by Avramov for more information on Golod algebras.

In \cite{Jo06} and \cite{BJ07} Berglund and J\"ollenbeck considered the Golod property for Stanley-Reisner rings.
They give a combinatorial characterization of Golodness in the class of flag simplicial complexes,
which in particular implies that the Golod property of these complexes does not depend on the field $\KK$ of coefficients.
Thus it seems natural to ask whether one can find a combinatorial description of the Golod property for Stanley-Reisner rings of general simplicial complexes.
The general expectation seems to be that this is not the case, i.e. for sufficiently complicated complexes the Golod property might depend $\KK$.
However, no example of this phenomenon was known.

%\newpage
In the present note, we provide a construction for such examples.
More precisely, we prove the following:
\begin{restatable*}{theorem}{mainthm}\label{thm:main}
	Let $T$ be a finite set of prime numbers.
	\begin{enumerate}
		\item There exists a simplicial complex $\Delta$ such that $\KK[\Delta]$ is Golod if and only if $\Char \KK \in T$.
		\item Also, there exists a simplicial complex $\Gamma$ such that $\KK[\Gamma]$ is Golod if and only if $\Char \KK \notin T$.
	\end{enumerate}
\end{restatable*}
We remark that for many properties of $\KK[\Delta]$, for example the property of having a componentwise linear resolution or being Cohen-Macaulay, only the second case can happen.
Our proof is constructive, but for convenience we also give two explicit examples for the case $T = \set{2}$ in Section \ref{sec:example}.

Let us explain the mechanisms which cause these two cases.
By a result of Iriye and Kishimoto \cite{IK}, the Golod property depends on the vanishing of certain maps between homology groups, see \Cref{prop:IK}.
On the one hand, it might happen that these homology groups are torsion groups and thus vanish in all but finitely many characteristics.
On the other hand, a map between the free parts of the homology groups might be the multiplication by some number $N$.
In this case, the map vanishes exactly for the finitely many prime divisors of $N$.

Given our main result, one might be tempted to ask if the finiteness assumption is necessary.
In other words, one could ask if there exists a simplicial complex which is Golod in infinitely many characteristics,
and non-Golod in infinitely many other characteristics.
Our second  result gives a negative answer to this question. Thus, the complexes constructed in \Cref{thm:main} are ``worst possible''.

\begin{restatable*}{proposition}{finiteness}\label{prop:finite}
For a simplicial complex $\Delta$, the following holds:
\begin{enumerate}
	\item The Golod property of $\KK[\Delta]$ depends only on the characteristic of $\KK$. More precisely, if $\KK$ and $\KK'$ are two fields with the same characteristic, then $\KK[\Delta]$ is Golod if and only if $\KK'[\Delta]$ is Golod.
	\item If $\QQ[\Delta]$ is Golod, then $\FF_p[\Delta]$ is Golod for all but at most finitely many primes $p$.
	\item If $\QQ[\Delta]$ is not Golod, then $\FF_p[\Delta]$ is Golod for at most finitely many primes $p$.
\end{enumerate}
Here, $\FF_p$ denotes the field with $p$ elements.
\end{restatable*}

This paper is organized as follows.
In \Cref{sec:prelim}, we collect some background information on Golod rings.
In particular, we derive some useful criteria for deciding whether a Stanley-Reisner ring is Golod.
In the third section, we first consider the contributions of the one-skeleton of a simplicial complex to the Koszul homology.
As a by-product, we show that a one-dimensional simplicial complex is Golod if and only if it is chordal.
After that we prove \Cref{thm:main} and \Cref{prop:finite}.
In \Cref{sec:example}, two explicit examples of Stanley-Reisner rings whose Golodness depends on the field are provided.
Finally, in the last section we shortly discuss a relation to \emph{decomposition $k$-chordal} complexes \cite{ANS} and pose a question for an improved criterion for Golodness.

\section{Preliminaries about the Golod property for Stanley-Reisner rings}\label{sec:prelim}
In this section, we recall some facts about Golod Stanley-Reisner rings.
We refer the reader to \cite{AvraG} and \cite{Avra} for a comprehensive treatment of general Golod rings.

Let $\Delta$ be a fixed simplicial complex.
We denote its set of vertices by $V(\Delta)$ and set $n := \#V(\Delta)$.
Let $\KK[\Delta] = S/I_\Delta$ its Stanley-Reisner ring (over some fixed field $\KK$), cf. Chapter 5 of \cite{BH}.
Here, $S = \KK[X_v \with v \in V(\Delta)]$ is a polynomial ring and 
\[ I_\Delta = \left(\prod_{v \in M} X_v \with M \subseteq V(\Delta), M \notin \Delta \right) \]
is the Stanley-Reisner ideal of $\Delta$.
$\KK[\Delta]$ carries a natural $\NN^n$-grading and we will occasionally identify squarefree multidegrees with subsets of $[n] := \set{1,\dotsc,n}$.

Let further $K_{\KK[\Delta]}$ denote the \emph{Koszul complex} of $\KK[\Delta]$, see \cite[Chapter 1.6]{BH}.
It carries natural ``homological'' $\NN$-grading, in addition to the ``internal'' $\NN^n$-grading inherited from $\KK[\Delta]$.
For a homogeneous element $a \in K_{\KK[\Delta]}$, we denote by $|a|$ its homological degree and by $\deg a$ its internal multidegree.
It is well-known that $K_{\KK[\Delta]}$ is an $S$-algebra, which is skew-commutative with respect to the homological grading.
Also, the multiplication on $K_{\KK[\Delta]}$ induces a multiplication on its homology $H_*(K_{\KK[\Delta]}) = \Tor_*^S({\KK[\Delta]}, \KK)$.

By definition, $\KK[\Delta]$ is called Golod if the equality \eqref{eq:ps} of power series holds.
Golod \cite{golod} showed that this is equivalent to the condition that all Massey products on the homology $H_*(K_{\KK[\Delta]})$ are trivial.
We will recall the definition of the latter below in the proof of \Cref{lem:2dim}.
For now, we only point out that the second Massey product of two elements $a_1, a_2 \in H_*(K_{\KK[\Delta]})$ is just the usual product.

Hochster \cite{hochster} gave a topological interpretation of the homogeneous strands of the Koszul complex $K_{\KK[\Delta]}$.
Namely, the strand $(K_{\KK[\Delta]})_I$ in degree $I \subseteq [n]$ is isomorphic to the simplicial cochain complex on the \emph{restriction} $\restr{\Delta}{I}$, where the cohomological grading on the latter is reversed and shifted, such that the vertices of $\restr{\Delta}{I}$ sit in degree $\#I - 1$. 
In particular, passing to homology yields Hochster's formula
\begin{equation}\label{eq:hochster}
	H_i(K_{\KK[\Delta]})_I = \tilde{H}^{\#I - 1 - i}(\restr{\Delta}{I};\KK).
\end{equation}
This equation yields a useful interpretation of the product on $H_*(K_{\KK[\Delta]})$, cf.~\cite[Proposition 3.2.10]{tt}.
Indeed, the latter is in fact induced from the inclusions $\restr{\Delta}{I \cup J} \hookrightarrow \restr{\Delta}{I} * \restr{\Delta}{J}$ for all $I,J \neq \emptyset$ with $I \cap J =\emptyset$, where
$\Delta * \Gamma := \set{F \cup G \with F \in \Delta, G\in \Gamma}$ denotes the \emph{join}.
Since we are working over a field, we may replace cohomology by homology.
This leads to the following useful criterion for the vanishing of the product:
\begin{proposition}[Proposition 6.3, \cite{IK}]\label{prop:IK}
	The product on $H_*(K_{\KK[\Delta]})$ is trivial if and only if the inclusion $\restr{\Delta}{I \cup J} \hookrightarrow \restr{\Delta}{I} * \restr{\Delta}{J}$ induces the zero map in homology with coefficients in $\KK$ for all $\emptyset \neq I,J \subset V(\Delta)$ with $I \cap J = \emptyset$.
\end{proposition}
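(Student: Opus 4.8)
The plan is to translate the algebraic statement about the product on $H_*(K_{\KK[\Delta]})$ into a purely topological statement via Hochster's formula \eqref{eq:hochster} together with the description of the product recalled above, and then to exploit the duality between homology and cohomology over the field $\KK$. The whole argument is a chain of equivalences, so both directions of the ``if and only if'' will come out at once.

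First I would cut down the range of multidegrees that matter. By \eqref{eq:hochster} the groups $\Tor^S_*(\KK[\Delta],\KK)$ are concentrated in squarefree multidegrees, so the product of a class in multidegree $I$ with a class in multidegree $J$ lands in multidegree $I+J$, which is squarefree only when $I\cap J=\emptyset$; hence the product vanishes automatically whenever $I\cap J\neq\emptyset$. Moreover the strand in degree $\emptyset$ reduces, again by \eqref{eq:hochster}, to the one-dimensional unit sitting in homological degree $0$, whose products are the identity rather than zero. Thus only pairs of \emph{nonempty} \emph{disjoint} $I,J$ carry information, which is exactly the range $\emptyset\neq I,J\subset V(\Delta)$ with $I\cap J=\emptyset$ appearing in the statement.

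Next I would make the product explicit on these strands. By \eqref{eq:hochster} and the discussion preceding the proposition (cf.\ \cite[Proposition 3.2.10]{tt}), the product map $H_*(K_{\KK[\Delta]})_I\otimes H_*(K_{\KK[\Delta]})_J\to H_*(K_{\KK[\Delta]})_{I\cup J}$ is identified, up to the degree shift of Hochster's formula, with the composite
\[ \tilde{H}^*(\restr{\Delta}{I};\KK)\otimes\tilde{H}^*(\restr{\Delta}{J};\KK)\xrightarrow{\ \times\ }\tilde{H}^*(\restr{\Delta}{I}*\restr{\Delta}{J};\KK)\xrightarrow{\ \iota^*\ }\tilde{H}^*(\restr{\Delta}{I\cup J};\KK), \]
where $\times$ is the cohomological cross product for the join and $\iota^*$ is the pullback along the inclusion $\iota\colon\restr{\Delta}{I\cup J}\hookrightarrow\restr{\Delta}{I}*\restr{\Delta}{J}$. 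The key observation is that $\times$ is surjective: by the Künneth formula for the join over the field $\KK$, every class in $\tilde{H}^*(\restr{\Delta}{I}*\restr{\Delta}{J};\KK)$ is a sum of cross products $\alpha\times\beta$. Consequently the product vanishes on the $(I,J)$-strand if and only if $\iota^*$ is the zero map. This surjectivity is the crux of the argument: without it the vanishing of all products would only force $\iota^*$ to vanish on decomposable classes, and the equivalence would be one-sided.

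Finally I would dualize. All complexes involved are finite, so the relevant (co)homology groups are finite-dimensional $\KK$-vector spaces and $\iota^*$ is precisely the $\KK$-dual of the induced map $\iota_*\colon\tilde{H}_*(\restr{\Delta}{I\cup J};\KK)\to\tilde{H}_*(\restr{\Delta}{I}*\restr{\Delta}{J};\KK)$; a linear map of finite-dimensional spaces is zero exactly when its dual is. Chaining the three steps yields: the product on $H_*(K_{\KK[\Delta]})$ is trivial $\iff$ $\iota^*=0$ for all admissible $I,J$ $\iff$ $\iota_*=0$ for all admissible $I,J$, which is the assertion. I expect the only delicate point to be the bookkeeping of the homological degree shifts in Hochster's formula, so that the cited identification of the product matches the cross product in the displayed degrees verbatim; once this is in place, the Künneth surjectivity and the duality over $\KK$ are routine.
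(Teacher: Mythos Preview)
The paper does not supply its own proof of this proposition: it is quoted as Proposition~6.3 from \cite{IK}, preceded only by the remarks that the product is induced by the inclusions $\restr{\Delta}{I\cup J}\hookrightarrow\restr{\Delta}{I}*\restr{\Delta}{J}$ (via \cite[Proposition~3.2.10]{tt}) and that ``since we are working over a field, we may replace cohomology by homology.'' Your argument is a correct fleshing-out of exactly these two hints: the identification of the product with $\iota^*\circ\times$, the K\"unneth surjectivity of $\times$ over a field (so that the vanishing of the composite is equivalent to the vanishing of $\iota^*$), and the passage from $\iota^*=0$ to $\iota_*=0$ by duality over $\KK$. Nothing is missing; your proof is precisely the one the paper's exposition points to but leaves to the reference.
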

\begin{notation}
	Let $i \in \NN$ and let $I, J$ be two non-empty disjoint subsets of $V(\Delta)$.
	We write $\phi_i^{I,J}\colon \Hi{i}{\restr{\Delta}{I \cup J}} \rightarrow \Hi{i}{\restr{\Delta}{I} * \restr{\Delta}{J}}$ for the map induced by the inclusion $\restr{\Delta}{I \cup J} \hookrightarrow \restr{\Delta}{I} * \restr{\Delta}{J}$.
\end{notation}
\noindent The following is just a reformulation of \Cref{prop:IK}:
\begin{corollary}\label{prop:hom}
	The product on $H_*(K_{\KK[\Delta]})$ is trivial if and only if $\phi_i^{I,J} = 0$ for all $i \in \NN$ and all nonempty disjoint $I,J \subset V(\Delta)$.
\end{corollary}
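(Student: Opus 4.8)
The plan is to observe that \Cref{prop:hom} is merely a graded restatement of \Cref{prop:IK}, so almost nothing needs to be done beyond unwinding the notation introduced just before the corollary. The induced map on reduced homology $\Hi{*}{\restr{\Delta}{I \cup J}} \to \Hi{*}{\restr{\Delta}{I} * \restr{\Delta}{J}}$ is a morphism of $\NN$-graded $\KK$-vector spaces, and its component in homological degree $i$ is by definition exactly the map $\phi_i^{I,J}$ from the Notation. Since a morphism of graded vector spaces vanishes if and only if each of its graded components vanishes, I would argue that, for a fixed admissible pair $(I,J)$, the inclusion $\restr{\Delta}{I \cup J} \hookrightarrow \restr{\Delta}{I} * \restr{\Delta}{J}$ induces the zero map in homology precisely when $\phi_i^{I,J} = 0$ for every $i \in \NN$.

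Quantifying over all nonempty disjoint $I, J \subset V(\Delta)$ then shows that the condition appearing in \Cref{prop:IK} --- the inclusion induces the zero map in homology for all such $I,J$ --- is logically equivalent to the condition appearing in \Cref{prop:hom} --- namely $\phi_i^{I,J} = 0$ for all $i \in \NN$ and all such $I,J$. As \Cref{prop:IK} already identifies the former condition with the triviality of the product on $H_*(K_{\KK[\Delta]})$, the equivalence asserted in \Cref{prop:hom} follows immediately by transitivity.

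There is essentially no obstacle here; the statement is a formal reformulation rather than a new assertion. The only points worth making explicit are that reduced homology is taken degreewise, so that ``the zero map in homology'' genuinely means ``zero in each homological degree,'' and that the universal quantifier over $i \in \NN$ is harmless: the homological degree is bounded by $\#(I \cup J) - 1$, and outside this range both source and target vanish, so $\phi_i^{I,J} = 0$ holds trivially there.
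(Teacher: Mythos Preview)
Your proposal is correct and matches the paper's own treatment exactly: the paper states that \Cref{prop:hom} ``is just a reformulation of \Cref{prop:IK}'' and gives no further argument. Your unwinding of the notation is precisely the content of that reformulation.
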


The following lemma allows us to concentrate on the product on $H_*(K_{\KK[\Delta]})$, so we do not need to consider the higher Massey products. 
\begin{lemma}\label{lem:2dim}
	Assume that $\dim \Delta \leq 2$ and
	that for any two disjoint non-empty sets $I,J \subset V(\Delta)$, at least one of the two complexes $\restr{\Delta}{I}$ and $\restr{\Delta}{J}$ is connected.
	Then $\KK[\Delta]$ is Golod if and only if the product on $H_*(K_{\KK[\Delta]})$ is trivial.
\end{lemma}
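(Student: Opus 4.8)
The plan is to use Golod's criterion in terms of \emph{trivial Massey operations}. Recall that $\KK[\Delta]$ is Golod if and only if there is a function $\mu$ assigning to every finite sequence $a_1, \dots, a_n$ taken from a fixed multihomogeneous $\KK$-basis of $H_{\geq 1}(K_{\KK[\Delta]})$ a homogeneous chain $\mu(a_1, \dots, a_n) \in K_{\KK[\Delta]}$ such that $\mu(a)$ is a cycle representing $a$ and
\[
	d\,\mu(a_1, \dots, a_n) = \sum_{j=1}^{n-1} \overline{\mu(a_1, \dots, a_j)}\cdot \mu(a_{j+1}, \dots, a_n),
\]
where $\bar{x} = (-1)^{|x|+1}x$. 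The existence of such a $\mu$ forces all Massey products, hence in particular all ordinary products, to vanish. For the ``only if'' direction I note that the relation for $n = 2$ reads $d\,\mu(a_1,a_2) = \pm\, \mu(a_1)\mu(a_2)$, so the product of any two positive-degree classes is a boundary and hence trivial; this needs no hypothesis on $\Delta$. It thus remains to prove the ``if'' direction: assuming the product is trivial, I would construct $\mu$ by induction on $n$.

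Throughout I would work multihomogeneously. Since $K_{\KK[\Delta]}$ is $\NN^n$-graded with multihomogeneous differential and multiplication, I may take every $\mu(a_1, \dots, a_n)$ to be multihomogeneous of internal multidegree $\deg a_1 + \dots + \deg a_n$. The key bookkeeping step is the following degree count. Suppose $a_k \in H_{i_k}(K_{\KK[\Delta]})_{I_k}$ with $I_k \neq \emptyset$, and set $c_k := \#I_k - 1 - i_k$, so that by \eqref{eq:hochster} the class $a_k$ lives in $\tilde{H}^{c_k}(\restr{\Delta}{I_k};\KK)$; as $\dim\restr{\Delta}{I_k} \leq 2$ and $\restr{\Delta}{I_k} \neq \emptyset$, we have $c_k \in \set{0,1,2}$, with $c_k = 0$ precisely when $\restr{\Delta}{I_k}$ is disconnected. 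The right-hand side of the displayed relation is a cycle $z_n$ (this is the standard computation underlying the definition of Massey products) of homological degree $i := \sum_k i_k + (n-2)$ and internal multidegree $\sum_k \deg a_k$. If the $I_k$ are not pairwise disjoint, this multidegree is not squarefree, and since $\Tor^S(\KK[\Delta],\KK)$ is concentrated in squarefree degrees, $z_n$ is automatically a boundary. If the $I_k$ are pairwise disjoint, then $I := \bigcup_k I_k$ satisfies $\#I = \sum_k \#I_k$, and \eqref{eq:hochster} places $[z_n]$ in $\tilde{H}^{\#I - 1 - i}(\restr{\Delta}{I};\KK)$, where a direct computation gives $\#I - 1 - i = \sum_k c_k + 1$.

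With this in hand I would carry out the induction. For $n = 1$ I pick multihomogeneous cycle representatives, and for $n = 2$ the cycle $z_2$ represents $[a_1][a_2]$, which is a boundary by the triviality-of-products assumption (when the supports are disjoint) or because its multidegree is non-squarefree (otherwise). For the inductive step $n \geq 3$, the chain $z_n$ is well defined once $\mu$ is known on shorter sequences, and I must show $[z_n] = 0$ in order to choose $\mu(a_1, \dots, a_n)$ with $d\,\mu(a_1,\dots,a_n) = z_n$. If the supports overlap this is immediate from squarefreeness. If they are pairwise disjoint, the hypothesis forbids two of the disjoint sets $I_1, \dots, I_n$ from both having disconnected restriction, so at most one $c_k$ equals $0$; as $n \geq 3$, at least two of the $c_k$ are $\geq 1$, whence $\sum_k c_k \geq 2$. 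Then $[z_n]$ lives in $\tilde{H}^{\,\geq 3}(\restr{\Delta}{I};\KK) = 0$ because $\dim\restr{\Delta}{I} \leq \dim\Delta \leq 2$. Thus $z_n$ is null-homologous, the induction proceeds, and the resulting $\mu$ is a trivial Massey operation, so $\KK[\Delta]$ is Golod.

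I expect the main obstacle to be largely expository: verifying that $z_n$ is genuinely a cycle and keeping careful track of the degree shifts in the defining relation. The conceptual content — the only place both hypotheses are used — is the final degree count, where $\dim \Delta \leq 2$ caps the cohomological degree at $2$ while the connectivity assumption prevents more than one of the pairwise-disjoint restrictions from contributing a degree-$0$ class, thereby pushing every higher Massey obstruction into a vanishing cohomology group.
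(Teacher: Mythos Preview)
Your proof is correct and follows essentially the same approach as the paper: both reduce the question to a degree count showing that, for $n\geq 3$, the obstruction class lives in $\tilde{H}^{\geq 3}$ of a restriction of $\Delta$, which vanishes since $\dim\Delta\leq 2$; the connectivity hypothesis enters in exactly the same way, by forbidding two of the disjoint $I_k$ from contributing a degree-$0$ (disconnected) factor. The only cosmetic difference is that the paper works with the classical partially defined set-valued Massey products and argues inductively that they become honest maps, whereas you use Golod's original characterization via a trivial Massey operation and construct $\mu$ directly---these are equivalent framings of the same computation.
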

\begin{proof}
The necessity is clear, so we only consider the sufficiency.

Let us recall the definition of the Massey products.
As mentioned above, the second Massey product of two elements $a_1, a_2 \in H_*(K_{\KK[\Delta]})$ is just the usual product.
Let us denote it by $\mu_2(a_1,a_2)$.
For $n \geq 3$, the $n$-th Massey product is a partially defined set-valued function, which assigns to $n$ elements $a_1, \dotsc, a_n \in H_*(K_{\KK[\Delta]})$ a set $\mu_n(a_1, \dotsc, a_n) \subset H_*(K_{\KK[\Delta]})$.
It is defined if there exist elements $a_{ij} \in K_{\KK[\Delta]}$ for $1 \leq i \leq j \leq n$, such that $da_{ii} = 0$, $[a_{ii}] = a_i$ and 
\[ da_{ij} = \sum_{v=i}^j \bar{a}_{iv}a_{vj}, \]
where $\bar{a} = (-1)^{|a|+1}$. 
Then $\sum_{v=1}^n \bar{a}_{iv}a_{vj}$ is called a Massey product of $a_1, \dotsc, a_n$ and $\mu_n(a_1, \dotsc, a_n)$ is the set of all these elements.
One says that the Massey product is \emph{trivial up to $n$}, if for all $2\leq i \leq n$ and all for all $i$-tuples $a_1, \dotsc, a_i \in H_*(K_{\KK[\Delta]})$, the set $\mu_i(a_1, \dotsc, a_i)$ is defined and contains only zero.
Finally, we say that $H_*(K_{\KK[\Delta]})$ has \emph{trivial Massey products} if it is trivial for all $n$.
We will use the following properties of the Massey products, which follow from the definition and are well-known: 

\begin{enumerate}[(a)]
	\item If the Massey product is trivial up to $n-1$, then $\mu_{n}(a_1, \dotsc, a_{n})$ is defined for all $n$-tuples $a_1, \dotsc, a_{n}$ and contains only one element.
		So, in this case one has an actual map
		$ \mu_{n}: H_*(K_{\KK[\Delta]})^{\otimes {n}} \to H_*(K_{\KK[\Delta]})$.
	\item If $\mu_n(a_1, \dotsc, a_n)$ is defined, then every element in this set has multidegree $\sum_i\deg a_i$ and homological degree $\sum_i (|a_i|+1) - 2$.
	\item If $\mu_n(a_1, \dotsc, a_n)$ is defined and one of the $a_i$ is zero, then $\mu_n(a_1, \dotsc, a_n)$ contains zero.
\end{enumerate}

By induction, assume that the Massey product is trivial up to $n-1$ for some $n \geq 3$, so $\mu_{n}$ is a map by (a).
Then (b) implies that $\mu_n$ is zero in any non-squarefree degree.
In a squarefree multidegree $I$, Hochster's formula \eqref{eq:hochster} implies that $\mu_n$ decomposes into a direct sum of maps of the form
\begin{equation}\label{eq:degree}
	\bigoplus_{i_1, \dotsc, i_n} \Hi{i_1}{\restr{\Delta}{I_1}} \otimes \dotsb \otimes \Hi{i_n}{\restr{\Delta}{I_n}} \to \Hi{i}{\restr{\Delta}{I}} 
\end{equation}
where $I_1 \cup I_2 \cup \dotsb \cup I_n = I$ is a disjoint decomposition of $I$ with $I_j \neq \emptyset$ for all $j$, 
and the sum runs over all $0\leq i_1, \dotsc, i_n \leq n$ with $\sum_j i_j = i-1$.
As $\dim \Delta = 2$, it follows from \eqref{eq:degree} and (c) that for $\mu_n$ being not trivial, 
there have to exist non-empty disjoint subsets $I_1, \dotsc, I_n \subset V$, such that either 
\begin{itemize}
	\item $\Hi{0}{\restr{\Delta}{I_i}} \neq 0$ for all $i$, or
	\item $\Hi{0}{\restr{\Delta}{I_i}} \neq 0$ for all $i$ but one, say $i_1$, and $\Hi{1}{\restr{\Delta}{I_{i_1}}} \neq 0$.
\end{itemize}
In both cases, there are at least two disjoint non-empty subsets $I,J \subset V$ such that both $\restr{\Delta}{I}$ and $\restr{\Delta}{J}$ are disconnected, contradicting our assumption.
\end{proof}

\begin{remark}
\begin{asparaenum}
	\item Berglund and J\"ollenbeck showed in \cite[Theorem 5.1]{BJ07} that in general $\KK[\Delta]$ is Golod 
	if and only if the product on $H_*(K_{\KK[\Delta]})$ is trivial, so the assumptions of \Cref{lem:2dim} are superfluous.
	However, we prefer to prove our main result independently of that result. 
	The reason is that \Cref{lem:2dim} has a very simple proof and is sufficient for our present purpose,
	while the proof of \cite[Theorem 5.1]{BJ07} is rather long and involved.
	\item It is possible (and perhaps preferable) to consider Massey products in the language of $A_\infty$-algebras, cf. \cite{LPWZ, val}. 
	This allows one to replace the classical Massey products with maps $\mu_n: H_*(K_{\KK[\Delta]})^{\otimes n} \to H_*(K_{\KK[\Delta]})$ which are always defined and no longer set-valued.
	However, for the purpose of the present paper the classical approach to Massey products is sufficient.
\end{asparaenum}
\end{remark}

\section{Proof of the main results}
In this section, we first give some useful results concerning the maps $\phi_i^{I,J}$. 
After these preparations, our main result \Cref{thm:main} is be proven.
In the last part of the section, we prove the above mentioned \Cref{prop:finite}

We start with some notation.
For a simplicial complex $\Delta$, we write $C_i(\Delta; \KK)$ for the vector space of simplicial $i$-chains of $\Delta$.
We say that an $i$-cycle $c \in C_i(\Delta, \KK)$ \emph{contains} an $i$-face $F$ if it has a non-zero coefficient in $c$.
An $i$-cycle $c \in C_i(\Delta, \KK)$ is called \emph{complete} \cite{ANS} if it is the boundary of a $(i+1)$-simplex (which does not need to be a face of $\Delta$).
The following is a simple criterion to decide the vanishing or non-vanishing of $\phi_i^{I,J}$ in some cases.
\begin{lemma}\label{lem:complete}
	Let $\Delta$ be a simplicial complex, let $\emptyset \neq I,J \subseteq V(\Delta)$ with $I \cap J = \emptyset$ and let $c \in C_i(\restr{\Delta}{I \cup J};\KK)$ be an $i$-cycle, for some $i$.
	\begin{enumerate}
		\item If $c$ is complete, then $\phi_i^{I,J}(c) = 0$.
		\item If $c$ contains an $i$-face $F$, such that $F \cap I$ and $F \cap J$ are both facets of $\restr{\Delta}{I}$ and $\restr{\Delta}{J}$, respectively, then $\phi_i^{I,J}(c) \neq 0$.
	\end{enumerate}
\end{lemma}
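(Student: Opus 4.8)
The plan is to argue entirely on the level of (co)chains, using the explicit description of the inclusion $\restr{\Delta}{I\cup J}\hookrightarrow \restr{\Delta}{I}*\restr{\Delta}{J}$. The key preliminary observation I would record is that this inclusion sends each $i$-face $F'$ of $\restr{\Delta}{I\cup J}$ to the same set $F'=(F'\cap I)\cup(F'\cap J)$, now viewed as a face of the join. Fixing one global ordering of $V(\Delta)$ and orienting faces accordingly, the inclusion introduces no signs, so on chains $\phi_i^{I,J}$ is literally $c\mapsto c$; in particular $\phi_i^{I,J}(c)$ is represented by the very same cycle $c$, regarded in the join.

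For part (1), write the complete cycle as $c=\partial\sigma$, where $\sigma$ is the full $(i+1)$-simplex on a vertex set $W$ with $\#W=i+2$. Since every $i$-face of $c$ lies in $\restr{\Delta}{I\cup J}$, we have $W\subseteq I\cup J$, and (by downward closure) every subset of $W$ of size at most $i+1$ is a face of $\Delta$. I would then split into two cases. If $W\cap I\neq\emptyset$ and $W\cap J\neq\emptyset$, then $\#(W\cap I)\leq i+1$ and $\#(W\cap J)\leq i+1$, so both are faces of $\Delta$; hence $W\cap I\in\restr{\Delta}{I}$ and $W\cap J\in\restr{\Delta}{J}$, which means that $\sigma$ itself is a face of $\restr{\Delta}{I}*\restr{\Delta}{J}$. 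Then $\phi_i^{I,J}(c)=\partial\sigma$ is a boundary in the join, so its class vanishes. In the remaining degenerate case, say $W\subseteq I$, the cycle lives in $\restr{\Delta}{I}$; choosing any vertex $v\in J$, the cone $v*c=\sum_{F'}c_{F'}(\set{v}\cup F')$ is a chain of the join (each $\set{v}\cup F'$ has $I$-part $F'\in\restr{\Delta}{I}$ and $J$-part $\set{v}\in\restr{\Delta}{J}$), and $\partial(v*c)=\pm c$ because $\partial c=0$. Again $\phi_i^{I,J}(c)$ is a boundary and the class vanishes.

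For part (2), I would exhibit a cocycle pairing nontrivially with $\phi_i^{I,J}(c)$, using that over a field $\KK$ the Kronecker pairing between $\Hi{i}{\restr{\Delta}{I}*\restr{\Delta}{J}}$ and the corresponding cohomology is perfect (so a nonzero value against a cocycle forces the class to be nonzero). Take the face $F$ from the hypothesis and consider the dual cochain $e_F^*$ on the join. The crucial point is that $F$ is a \emph{facet} of $\restr{\Delta}{I}*\restr{\Delta}{J}$: any face of the join containing $F$ has the form $G_1\cup G_2$ with $F\cap I\subseteq G_1\in\restr{\Delta}{I}$ and $F\cap J\subseteq G_2\in\restr{\Delta}{J}$, and the maximality of the facets $F\cap I$ and $F\cap J$ forces $G_1=F\cap I$ and $G_2=F\cap J$, whence $G_1\cup G_2=F$. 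Consequently $e_F^*$ is a cocycle, and since $\phi_i^{I,J}(c)=c$ on chains, the pairing evaluates to $\langle e_F^*,\phi_i^{I,J}(c)\rangle=c_F\neq 0$, where $c_F$ is the nonzero coefficient of $F$ in $c$. Therefore $\phi_i^{I,J}(c)\neq 0$.

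The main obstacle is the degenerate case of part (1): there the full simplex $\sigma$ need not be a face of the join (one of $W\cap I$, $W\cap J$ is empty, and the remaining part need not lie in $\Delta$), so the boundary-of-a-simplex argument breaks down and must be replaced by the coning argument, which exploits that the inclusion of a single join factor into the join is null-homotopic. In part (2) the only care needed is to fix orientations consistently so that the inclusion is sign-free on chains, after which the facet observation makes both the cocycle property and the value of the pairing immediate.
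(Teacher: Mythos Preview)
Your proof is correct and follows essentially the same approach as the paper: for part (1) you make the same case split (support meeting both $I$ and $J$ versus contained in one side, handled by coning), and for part (2) you use the same key observation that $F$ is a facet of $\restr{\Delta}{I}*\restr{\Delta}{J}$. Your version is somewhat more explicit---you spell out why $W\cap I$ and $W\cap J$ are faces of $\Delta$, and you phrase the non-boundary argument via the dual cocycle $e_F^*$ rather than directly---but these are expository differences, not substantive ones.
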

\begin{proof}
\begin{asparaenum}
\item First, assume that some vertices of $c$ are in $I$ and some are in $J$, i.e., there are non-empty faces $\sigma \in \restr{\Delta}{I}, \tau \in \restr{\Delta}{J}$ such that the support of $c$ equals $\sigma \cup \tau$.
It holds that $\phi_i^{I,J}(c) = \partial(\sigma \cup \tau)$ because $c$ is complete.
Hence $\phi_i^{I,J}(c)$ is a boundary.

Next, consider the case that all vertices of $c$ are in one of the sets, say $I$.
Then $c$ is the boundary of the cone $c * \set{w}$ for any vertex $w \in J$.

\item Under the given hypothesis $F$ is a facet of $\restr{\Delta}{I} * \restr{\Delta}{J}$. Hence $\phi_i^{I,J}(c)$ is not a boundary.
\end{asparaenum}
\end{proof}

It was proven in \cite[Proposition 6.4]{BJ07} that the $1$-skeleton of a Golod simplicial complex is chordal.
The following extends this result.
\begin{proposition}\label{prop:chordal}
For a simplicial complex $\Delta$, the following are equivalent:
\begin{enumerate}
	\item The $1$-skeleton of $\Delta$ is a chordal graph.
	\item $\phi_1^{I,J} = 0$ for all non-empty disjoint subsets $I,J \subset V(\Delta)$.
\end{enumerate}
\end{proposition}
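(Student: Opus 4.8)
The plan is to prove the two implications separately, using \Cref{lem:complete} in both directions. Throughout, write $G$ for the $1$-skeleton of $\Delta$, and recall that chordality is inherited by induced subgraphs; since the $1$-skeleton of any restriction $\restr{\Delta}{I \cup J}$ is exactly the induced subgraph of $G$ on $I \cup J$, it is again chordal whenever $G$ is.

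For the implication $(1) \Rightarrow (2)$, I would reduce the vanishing of $\phi_1^{I,J}$ to a purely graph-theoretic statement about the cycle space. Fix disjoint non-empty $I, J$; since $\Hi{1}{\restr{\Delta}{I\cup J}}$ is represented by $1$-cycles, it suffices to show that the image of every $1$-cycle $c$ of $\restr{\Delta}{I \cup J}$ is null-homologous in $\restr{\Delta}{I} * \restr{\Delta}{J}$. The key observation is that the boundary $\partial[x,y,z]$ of any triangle $\set{x,y,z}$ of the chordal graph $G[I \cup J]$ is a \emph{complete} $1$-cycle, hence is sent to $0$ by \Cref{lem:complete}(1). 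By linearity, it is therefore enough to prove that the cycle space of a chordal graph is spanned by the boundaries of its triangles. I would establish this by induction on the number of vertices using a simplicial vertex $v$ (one whose neighborhood is a clique), which exists by Dirac's theorem: modulo the boundaries $\partial[v,a,b]$ with $a,b \in N(v)$ (genuine triangles, since $N(v)$ is a clique), the edges of a cycle incident to $v$ can be rewritten so as to eliminate $v$, and the resulting chain is a cycle of the chordal graph $G[I \cup J] - v$, to which the inductive hypothesis applies.

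For the converse $(2) \Rightarrow (1)$, I would argue by contraposition and exhibit, for a non-chordal $\Delta$, a single pair $I, J$ with $\phi_1^{I,J} \neq 0$. If $G$ is not chordal it contains an induced cycle $v_1 v_2 \cdots v_k$ with $k \geq 4$; set $I = \set{v_i : i \text{ odd}}$ and $J = \set{v_i : i \text{ even}}$, so that $I \cup J$ is the vertex set of the cycle. Because the cycle is induced and chordless, there are no edges inside $I$ or inside $J$ and no $2$-faces, so $\restr{\Delta}{I \cup J}$ is precisely this cycle, giving a non-zero class $c \in \Hi{1}{\restr{\Delta}{I \cup J}}$, while $\restr{\Delta}{I}$ and $\restr{\Delta}{J}$ are discrete. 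The edge $F = \set{v_2, v_3}$ of $c$ then satisfies $F \cap I = \set{v_3}$ and $F \cap J = \set{v_2}$, each an isolated vertex and hence a facet of the respective restriction, so \Cref{lem:complete}(2) yields $\phi_1^{I,J}(c) \neq 0$, and $(2)$ fails.

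The main obstacle is the graph-theoretic lemma used in the first implication, that the cycle space of a chordal graph is generated by triangle boundaries; everything else is bookkeeping. The simplicial-vertex induction is the natural tool, but some care is needed to verify that subtracting triangle boundaries really does eliminate all edges at $v$ — this uses that the coefficients of the edges at $v$ in a cycle sum to zero, combined with the clique structure of $N(v)$ — and that the reduced chain remains a cycle lying in the smaller chordal graph.
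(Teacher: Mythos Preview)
Your approach is essentially the paper's: both directions go through \Cref{lem:complete}, and the forward implication rests on the fact that the cycle space of a chordal graph is spanned by triangle boundaries (which the paper simply asserts, while you supply the standard simplicial-vertex induction).

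There is one small slip in your converse direction. When the induced cycle has odd length $k$, your bipartition $I = \{v_i : i \text{ odd}\}$, $J = \{v_i : i \text{ even}\}$ places the adjacent vertices $v_1$ and $v_k$ both in $I$, so the edge $\{v_1,v_k\}$ lies inside $I$; thus your claim that ``there are no edges inside $I$'' and that $\restr{\Delta}{I}$ is discrete fails. This does not actually break the argument, because the specific edge $F = \{v_2,v_3\}$ you selected still has $v_3$ isolated in $\restr{\Delta}{I}$ and $v_2$ isolated in $\restr{\Delta}{J}$ (neither is $v_1$ or $v_k$), so \Cref{lem:complete}(2) still applies. But the intermediate assertion should be corrected. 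The paper sidesteps the parity issue by taking $I = \{w_1,w_2\}$ to be just the two neighbours of one fixed vertex $v$ on the cycle and $J$ the remaining cycle vertices; then $w_1$ is isolated in $\restr{\Delta}{I}$ and $v$ is isolated in $\restr{\Delta}{J}$ for every $k \geq 4$, and the edge $\{v,w_1\}$ witnesses $\phi_1^{I,J} \neq 0$.
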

\begin{proof}
\begin{asparaenum}
\item[(1) $\Rightarrow$ (2)]
	If the $1$-skeleton of $\Delta$ is a chordal graph, then every $1$-cycle can be written as a sum of ``triangles''.
	But a triangle is a complete $1$-cycle, so the claim follows form \Cref{lem:complete}.

\item[(2) $\Rightarrow$ (1)]
	Assume that the $1$-skeleton of $\Delta$ contains a chordless cycle $c$ of length at least $4$.
	Let $v$ be any vertex of $c$ and let $w_1, w_2$ denote its two neighbors.
	Set $I := \set{w_1,w_2}$ and let $J$ be the set of all other vertices of $c$.
	Note that the vertices $v$ and $w_1$ are isolated in $\restr{\Delta}{I}$ and $\restr{\Delta}{J}$, respectively, because $c$ is chordless.
	
	But the homology class $\tilde{c} \in \Hi{1}{\restr{\Delta}{I \cup J}}$ corresponding to $c$ contains the edge $\set{v,w_1}$, hence $\phi_1^{I,J}(\tilde{c}) \neq 0$ by \Cref{lem:complete}.
\end{asparaenum}
\end{proof}

It is known that a flag simplicial complex is Golod if and only if its one-skeleton is chordal, see \cite[Theorem 6.7]{BJ07}.
A one-dimensional simplicial complex does not need to be flag, so the following can be seen as a partial extension of that result.
\begin{corollary}\label{cor:1}
Let $\Delta$ be a one-dimensional simplicial complex.
Then $\KK[\Delta]$ is Golod if and only if $\Delta$ is chordal.
\end{corollary}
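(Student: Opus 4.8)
The plan is to reduce the corollary to the triviality of the product on $H_*(K_{\KK[\Delta]})$. Since $\dim\Delta=1$, the $1$-skeleton of $\Delta$ is $\Delta$ itself, so chordality of $\Delta$ is exactly chordality of its $1$-skeleton and \Cref{prop:chordal} applies verbatim. First I would establish that, for a one-dimensional complex, the product on $H_*(K_{\KK[\Delta]})$ is trivial if and only if $\Delta$ is chordal. By \Cref{prop:hom} triviality is equivalent to the vanishing of all $\phi_i^{I,J}$. For one-dimensional $\Delta$ every restriction $\restr{\Delta}{W}$ has reduced homology only in degrees $0$ and $1$, so $\phi_i^{I,J}=0$ automatically for $i\ge 2$ (both source and target vanish); moreover $\phi_0^{I,J}=0$ because its target $\Hi{0}{\restr{\Delta}{I}*\restr{\Delta}{J}}$ is zero, the join of two nonempty complexes being connected. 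Hence only the maps $\phi_1^{I,J}$ can be nonzero, and by \Cref{prop:chordal} they all vanish precisely when $\Delta$ is chordal. In particular this gives the necessity in the corollary: if $\KK[\Delta]$ is Golod then the product (its second Massey product) is trivial, so $\Delta$ is chordal.

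For the converse I need to pass from triviality of the product to Golodness, i.e. to the vanishing of all higher Massey products. The natural tool is \Cref{lem:2dim}, but its connectivity hypothesis can fail here: for two disjoint triangles one may take $I$ and $J$ to be two disjoint non-edges, making both $\restr{\Delta}{I}$ and $\restr{\Delta}{J}$ disconnected. I would therefore redo the degree bookkeeping from the proof of \Cref{lem:2dim} directly in dimension one. As there, every higher product decomposes as in \eqref{eq:degree}, and since now $i\le 1$ and each $i_j\ge 0$ the relation $\sum_j i_j=i-1$ forces $i=1$ and all $i_j=0$. Thus, for every $n\ge 3$, the only possibly nonzero component of $\mu_n$ is a map $\bigotimes_j \Hi{0}{\restr{\Delta}{I_j}}\to \Hi{1}{\restr{\Delta}{I}}$ in which all the restrictions $\restr{\Delta}{I_j}$ are disconnected, and the whole problem is reduced to showing that these vanish.

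This vanishing is the main obstacle, and it is precisely where the non-flag triangles of $\Delta$ enter. The target $\Hi{1}{\restr{\Delta}{I}}$ of the chordal graph $\restr{\Delta}{I}$ is spanned by its triangles, which are complete cycles. I would attempt to show, in the spirit of \Cref{lem:complete}(1), that the Koszul representative of each such Massey product is a boundary: using that $X_aX_b$ vanishes exactly on non-edges, the bounding chains $w_{ij}$ that witness the (already established) vanishing of the pairwise products can be chosen so that the alternating sum defining $\mu_n$ telescopes to a boundary. Making this cancellation work uniformly in $n$ is the delicate point. Alternatively, and more cheaply, one may invoke \cite[Theorem 5.1]{BJ07}, which identifies Golodness with triviality of the product, to conclude immediately; I would mention this as the short route while preferring the self-contained argument.
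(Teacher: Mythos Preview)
Your short route via \cite[Theorem 5.1]{BJ07} is exactly the paper's primary proof: combine \Cref{prop:chordal} with that theorem and you are done. The necessity direction you give (Golod $\Rightarrow$ trivial product $\Rightarrow$ all $\phi_1^{I,J}=0$ $\Rightarrow$ chordal, with $\phi_0^{I,J}$ and $\phi_{\ge 2}^{I,J}$ vanishing for the reasons you state) is correct and slightly more explicit than the paper, which just says ``immediate''.

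Where you diverge is in the self-contained sufficiency argument. You correctly observe that \Cref{lem:2dim} need not apply (two disjoint triangles is a genuine counterexample to the connectivity hypothesis), and your degree count reducing the higher $\mu_n$ to maps $\bigotimes_j \Hi{0}{\restr{\Delta}{I_j}}\to \Hi{1}{\restr{\Delta}{I}}$ is right. But you then leave the actual vanishing as ``the delicate point'', so this route is a sketch rather than a proof. The paper does not attempt this; instead it offers a different self-contained alternative that you missed: if $\Delta$ is chordal then $I_\Delta$ is componentwise linear (the quadratic part has a linear resolution by Fr\"oberg's theorem, and the regularity is at most $3$), hence $\KK[\Delta]$ is Golod by \cite{HRW}. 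This bypasses Massey products entirely and is much cleaner than the telescoping argument you outline.
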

After the preparation of the present article I learned that Iriye and Kishimoto proved a stronger version of this corollary in \cite[Theorem 11.8]{IK}.
\begin{proof}
This is immediate from the foregoing \Cref{prop:chordal} and \cite[Theorem 5.1]{BJ07}.
Alternatively, if $\Delta$ is chordal, then its Stanley-Reisner ideal is componentwise linear (the degree $2$ part is linear by \cite{fro} and the regularity is at most $3$), so $\KK[\Delta]$ is Golod, cf.~\cite{HRW}.
\end{proof}

Now we turn to the proof of our  main result:
\mainthm
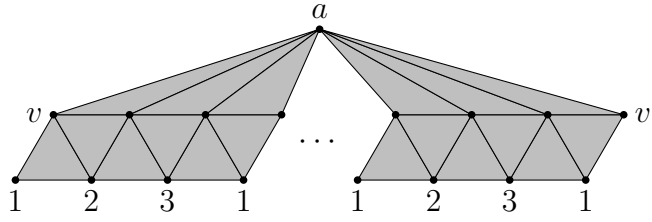
\begin{figure}[t]
\begin{tikzpicture}[scale=1]
\coordinate (1) at (0,0);
\coordinate (2) at (1,0);
\coordinate (3) at (2,0);
\coordinate (12) at (3,0);

\coordinate (1N) at (4.5,0);
\coordinate (2N) at ($(1N)+(1,0)$);
\coordinate (3N) at ($(1N)+(2,0)$);
\coordinate (1Np) at ($(1N)+(3,0)$);

\coordinate (v11) at (60:1);
\coordinate (v21) at ($(v11)+(1,0)$);
\coordinate (v31) at ($(v11)+(2,0)$);
\coordinate (v12) at ($(v11)+(3,0)$);

\coordinate (v1N) at ($(v11)+(4.5,0)$);
\coordinate (v2N) at ($(v1N)+(1,0)$);
\coordinate (v3N) at ($(v1N)+(2,0)$);
\coordinate (v1Np) at ($(v1N)+(3,0)$);

\coordinate (a) at (4, 2);

%	\foreach \a/\b/\c in {
%			 (1)/ (2)/(v11),  (2)/ (3)/(v21),  (3)/ (12)/(v31),
%			(1N)/(2N)/(v1N), (2N)/(3N)/(v2N), (3N)/(1Np)/(v3N)}
%		\filldraw[fill=gray!50] \a -- \b -- \c -- cycle;

\filldraw[fill=gray!50] (1) -- (2) -- (v11) -- cycle;
\filldraw[fill=gray!50] (2) -- (3) -- (v21) -- cycle;
\filldraw[fill=gray!50] (3) -- (12) -- (v31) -- cycle;
\filldraw[fill=gray!50] (1N) -- (2N) -- (v1N) -- cycle;
\filldraw[fill=gray!50] (2N) -- (3N) -- (v2N) -- cycle;
\filldraw[fill=gray!50] (3N) -- (1Np) -- (v3N) -- cycle;

%	\foreach \a/\b/\c in {
%			(v11)/  (2)/(v21), (v21)/  (3)/(v31), (v31)/  (12)/(v12),
%			(v1N)/(v2N)/ (2N), (v2N)/(v3N)/ (3N), (v3N)/(v1Np)/(1Np)}
%		\filldraw[fill=gray!50] \a -- \b -- \c -- cycle;

\filldraw[fill=gray!50] (v11) -- (2) -- (v21) -- cycle;
\filldraw[fill=gray!50] (v21) -- (3) -- (v31) -- cycle;
\filldraw[fill=gray!50] (v31) -- (12) -- (v12) -- cycle;
\filldraw[fill=gray!50] (v1N) -- (v2N) -- (2N) -- cycle;
\filldraw[fill=gray!50] (v2N) -- (v3N) -- (3N) -- cycle;
\filldraw[fill=gray!50] (v3N) -- (v1Np) -- (1Np) -- cycle;

%	\foreach \a/\b in {
%			(v11)/  (v21), (v21)/  (v31), (v31)/  (v12),
%			(v1N)/(v2N), (v2N)/(v3N), (v3N)/(v1Np)}
%		\filldraw[fill=gray!50] \a -- \b -- (a) -- cycle;

\filldraw[fill=gray!50] (v11) -- (v21) -- (a) -- cycle;
\filldraw[fill=gray!50] (v21) -- (v31) -- (a) -- cycle;
\filldraw[fill=gray!50] (v31) -- (v12) -- (a) -- cycle;
\filldraw[fill=gray!50] (v1N) -- (v2N) -- (a) -- cycle;
\filldraw[fill=gray!50] (v2N) -- (v3N) -- (a) -- cycle;
\filldraw[fill=gray!50] (v3N) -- (v1Np) -- (a) -- cycle;

%	\foreach \a/\b in {(1)/$1$,(2)/$2$,(3)/$3$,(12)/$1$, (1N)/$1$,(2N)/$2$,(3N)/$3$,(1Np)/$1$}
%		\fill \a circle (0.05) node[anchor=north] {$1$} ;
\fill (1) circle (0.05) node[anchor=north] {$1$};
\fill (2) circle (0.05) node[anchor=north] {$2$};
\fill (3) circle (0.05) node[anchor=north] {$3$};
\fill (12) circle (0.05) node[anchor=north] {$1$};
\fill (1N) circle (0.05) node[anchor=north] {$1$};
\fill (2N) circle (0.05) node[anchor=north] {$2$};
\fill (3N) circle (0.05) node[anchor=north] {$3$};
\fill (1Np) circle (0.05) node[anchor=north] {$1$};

%	\foreach \a/\b in {(v11)/$v_{11}$,(v21)/$v_{21}$,(v31)/$v_{31}$,(v12)/$v_{12}$,(v1N)/$v_{1N}$,(v2N)/$v_{2N}$,(v3N)/$v_{3N}$,(v1Np)/$v_{11}$}
%		\fill \a circle (0.05) node[anchor=south] {};
\foreach \a in {(v11),(v21),(v31),(v12),(v1N),(v2N),(v3N),(v1Np)}
\fill \a circle (0.05) node[anchor=south] {};

\node at (4, 0.5) {$\cdots$};
\fill (a) circle (0.05) node[anchor=south] {$a$};
\draw (v11) node[anchor=east]  {$v$};
\draw (v1Np) node[anchor=west] {$v$};

\end{tikzpicture}
\caption{The complex $\Delta_1$ used in the proof of \Cref{thm:main}. The sequence $1,2,3$ repeats $N$ times.}
\label{fig:ex1}
\end{figure}
\begin{proof}
	Let $N$ be the product of the elements of $T$.
	
	Let $\Delta_1$ be the simplicial complex indicated in Fig. \ref{fig:ex1}, where vertices with the same label are to be identified.
	Here, the sequence $1,2,3$ of the bottom vertices is repeated $N$ times.
	So, topologically $\Delta_1$ is obtained by gluing the boundary of a $2$-cell $N$ times around the cycle $\gamma$ formed by the vertices $1,2$ and $3$.
	
	\medskip
	(1) Let $\Delta_2$ be the complex obtained by gluing two copies of $\Delta_1$ along $\gamma$.
	We denote the vertex ``$a$'' in the second copy of $\Delta_1$ by $b$.
	Further, let $\Delta$ be the complex which is obtained from $\Delta_2$ by adding all edges between any two vertices except $a$ and $b$.

	Note that $\Delta$ satisfies the hypotheses of \Cref{lem:2dim}, so it is sufficient to show that the product on 
	$H_*(K_{\KK[\Delta]})$ is trivial if and only if $\Char \KK$ divides $N$.
	We will use \Cref{prop:hom} for this.
	For dimension reasons, $\phi_i^{I,J} = 0$ for all $i \geq 3$ and all $I,J$.
	Further, the $1$-skeleton of $\Delta$ is complete except for the missing edge from $a$ to $b$, so it is chordal.
	Hence $\phi_1^{I,J} = 0$ for any $I$ and $J$ by \Cref{prop:chordal}.
	To finish the proof, we are going to show that
	\begin{enumerate}[(a)]
		\item for $I := \set{a,b}$ and $J := V(\Delta) \setminus I$, it holds that $\phi_2^{I,J}$ is zero if and only if $\Char \KK \in T$, and
		\item further, $\phi_2^{I',J'} = 0$ independently of the field for any two disjoint vertex sets $I', J'$ with $\set{I', J'} \neq \set{I,J}$.
	\end{enumerate}
	
	We start by showing the first item, so let $I = \set{a,b}$ and let $J = V(\Delta) \setminus I$.
	Let $\sigma_1, \sigma_2 \in C_2(\Delta;\KK)$ be the sums of all triangles in the first and the second copy of $\Delta_1$, respectively, endowed with suitable signs.
	Clearly, the boundary of both $\sigma_1$ and $\sigma_2$ is $N$ times $\gamma$.
	So $\sigma_1$ and $\sigma_2$ are both cycles if $\Char \KK$ divides $N$. 
	Further, their difference $\sigma :=\sigma_1-\sigma_2$ is a cycle, independently of the field.

	If $\Char \KK \in T$, then $\Hi{2}{\Delta}$ is generated by (the classes of) $\sigma_1$ and $\sigma_2$.
	The image of $\sigma_1$ (resp. $\sigma_2$) in $\restr{\Delta}{I} * \restr{\Delta}{J}$ is supported on the contractible subcomplex $\set{a} * \restr{\Delta}{J}$ (respectively $\set{b} * \restr{\Delta}{J}$), so $\phi_2^{I,J}$ sends it to zero.
	Thus, if $\Char \KK \in T$ then $\phi_2^{I,J}$ is the zero map for the given choice of $I$ and $J$.

	Now assume that $\Char \KK \notin T$.
	Note that $\Hi{2}{\Delta} = \Hi{2}{\Delta_2}$, because the two complexes differ only in their $1$-skeleton.
	So the map $\phi_2^{I,J}$ can be factored as
	\[
		\Hi{2}{\Delta} \to \Hi{2}{\Delta_2} \stackrel{\psi}{\to} \Hi{2}{\restr{\Delta_2}{I} * \restr{\Delta_2}{J}} \to \Hi{2}{\restr{\Delta}{I} * \restr{\Delta}{J}}.
	\]
	The first map is a isomorphism and the last map is injective, because only $1$- and $2$-cells are added, so no $2$-cycle can become a boundary.
	Hence $\phi_2^{I,J}$ is nonzero if and only if the middle map $\psi$ is nonzero.
	
	Note that $\restr{\Delta_2}{J}$ retracts onto $\gamma$, hence  $\restr{\Delta_2}{I} * \restr{\Delta_2}{J}$ retracts onto $\restr{\Delta_2}{I} * \gamma$, which is just a suspension of $\gamma$, i.e., a $2$-sphere.
	This implies that a generator $\tau$ for $\Hi{2}{\restr{\Delta_2}{I} * \restr{\Delta_2}{J}}$ is given by a signed sum of all $2$-faces in $\restr{\Delta_2}{I} * \restr{\Delta_2}{J}$ containing one of the edges of $\gamma$ and either $a$ or $b$.
	On the other hand, $\Hi{2}{\Delta}$ is generated by (the class of) $\sigma := \sigma_1-\sigma_2$.
	By construction, it winds $N$ times around $\gamma$.
	Thus, under the above mentioned retraction it will be mapped to $\pm N$ times the generator of $\Hi{2}{\restr{\Delta_2}{I} * \gamma}$ and hence $\psi(\sigma) = \pm N \cdot \tau \neq 0$.
	In conclusion, $\phi_2^{I,J}$ is zero if and only if $\Char \KK \in T$.

	Finally, we show that  $\phi_2^{I',J'} = 0$ for any two disjoint vertex sets $I', J'$ with $\set{I', J'} \neq \set{I,J}$.
	By the K\"unneth formula, $\Hi{2}{\restr{\Delta}{I'} * \restr{\Delta}{J'}}$ can be nontrivial only if either $\Hi{0}{\restr{\Delta}{I'}} \neq 0$ or $\Hi{0}{\restr{\Delta}{J'}} \neq 0$.
	Hence we only need to consider the case $I' = I = \set{a,b}$. 
	
	It remains to show that $\phi_2^{I,J'} = 0$ if $J' \subsetneq V(\Delta) \setminus \set{a,b}$. 
	Let $\omega \in C_2(\restr{\Delta}{I \cup J'};\KK)$ be a cycle.
	If $\omega$ contains a $2$-face $F$, then for every edge of $F$ it also contains another $2$-face with that edge.
	Using the definition of $\Delta$, it follows that for each copy of $\Delta_1$ inside $\Delta$, $\omega$ contains either all or none of its $2$-faces.
	As $I \cup J' \neq V(\Delta)$, $\omega$ cannot contain all $2$-faces of $\Delta$, so all triangles in $\omega$ lie in the same copy of $\Delta_1$, say in the one with vertex $a$.
	But this implies that the image of $\omega$ in $C_2(\restr{\Delta}{I} * \restr{\Delta}{J'};\KK)$ it is supported on the contractible subcomplex $\set{a} * \restr{\Delta}{J'}$, so as above $\phi_2^{I,J'}$ sends it to zero.

\medskip

	(2) We proceed similar as in the proof of part (1).
	$\Delta_1$ is still the simplicial complex indicated in Fig.~\ref{fig:ex1}.
	Let $\Gamma$ be the complex obtained from $\Delta_1$ by adding all edges which do not contain $a$.
	
	As above, $\Gamma$ satisfies the hypotheses of \Cref{lem:2dim} and its $1$-skeleton is chordal,
	so we only need to consider the maps $\phi_2^{I,J}$.
	Let $J \subset V(\Gamma)$ be the set of neighbors of $a$ and let $I := V(\Gamma) \setminus J$.
	Here, a \emph{neighbor} of $a$ is any vertex sharing an edge with it. 
	This time, we are going to show the following:
	\begin{enumerate}[(a)]
		\item The map $\phi_2^{I,J}$ is nonzero if and only if $\Char \KK \in T$.
		\item Further, $\phi_2^{I',J'} = 0$ if $\Char \KK \notin T$ for any two disjoint vertex sets $I', J'$ with $\set{I', J'} \neq \set{I,J}$.
	\end{enumerate}
	
	We show both items simultaneously.
	Our description of $\Delta_1$ given above shows that
	\[ \Hi{2}{\Gamma} = \Hi{2}{\Delta_1} = \begin{cases}
		\KK & \text{ if } \Char \KK \in T\\
		0 	& \text{ otherwise.}
	\end{cases}\]
	So clearly $\phi_2^{I',J'} = 0$ if $\Char \KK \notin T$ for any two disjoint vertex sets $I', J'$ with $I' \cup J' = V(\Gamma)$, in particular for $I' = I$ and $J' = J$.
	Also, for two disjoint vertex sets $I', J'$ with $I' \cup J' \subsetneq V(\Gamma)$, one shows similarly to the argument in the proof of part (1) that $\Hi{2}{\restr{\Gamma}{I' \cup J'}} = 0$, so the corresponding map vanishes.
	
	It remains to show that $\phi_2^{I,J} \neq 0$ if $\Char \KK \in T$.
	In this case, a generator of $\Hi{2}{\Gamma}$ is given by the sum $\sigma_1 \in C_2(\Gamma;\KK)$ of all $2$-faces of $\Gamma$, endowed suitable signs.
	Now, $\sigma_1$ contains a $2$-face $F$ which contains $a$.
	Further, $F \cap I = \set{a} \in \restr{\Gamma}{I}$ is isolated and $F \cap J \in \restr{\Gamma}{J}$ is a facet, because $\restr{\Gamma}{J}$ is one-dimensional.
	Hence $\phi_2^{I,J}(\sigma_1) \neq 0$ by \Cref{lem:complete}
\end{proof}

\begin{example}
Let us illustrate the construction of part (1) in the case $T = \set{2}$.
In this case, $\Delta_1$ is a real projective plane.
One can think of it as being obtained by gluing the star of $a$, which is a disc, on the boundary of a M\"obius strip, which is the lower part of $\Delta_1$ in \Cref{fig:ex1}.
Now $\Delta_2$ is obtained by gluing two copies of this projective planes along $\gamma$.
In this situation $\sigma_1$ and $\sigma_2$ are the fundamental classes (over $\ZZ/2$)  of the two projective planes.

The map $\Delta_2 \to \restr{\Delta_2}{I} * \restr{\Delta_2}{J} \to \restr{\Delta_2}{I} * \gamma \approx S^2$ corresponds to retracting the M\"obius strips to $\gamma$.
Under this map $\sigma_1$ and $\sigma_2$ get deformed to ``{two times}'' the lower resp. upper hemisphere of the resulting sphere. Hence $\sigma_1 - \sigma_2$ is mapped to two times the fundamental class $\tau$ of $S^2$. 
\end{example}

It seems a natural question if the finiteness of the set $T$ in \Cref{thm:main} is really necessary.
In other words, could there be a simplicial complex $\Delta$ such that $\KK[\Delta]$ is Golod in infinitely many characteristics and non-Golod in infinitely many other characteristics?
Indeed, such a phenomenon is excluded by the following result.
For completeness, we also show that the Golod property only depends on the characteristic.
\finiteness

\begin{proof}
	The claim follows almost immediately from a characterization of the Golod property given by Berglund in \cite[Theorem 3]{Berglund2006}.
	We briefly recall this characterization.
	Let $I := I_\Delta$ be the Stanley-Reisner ideal of $\Delta$ and let $M$ be a minimal set of monomial generators of it. Note that $M$ only depends on $\Delta$ and not on the field $\KK$.
	Recall that the \emph{lcm-lattice} of $I$, $L_I$, is the lattice of all least common multiples of subsets of $M$, ordered by divisibility.
	For any monomial $m \in L_I$, let $M({\leq m})$ denote the set of those monomials in $M$ which divide $m$.
	Moreover, for any finite set $N$ of monomials, let $m_N$ denote the least common multiple of all elements in $N$.
	In \cite{Berglund2006}, Berglund associates to each finite set $N$ of monomials a natural number $c(N)$ and a lattice of certain subsets of $N$ which is denoted by $K(N)$.
	The exact definitions are not relevant here; we only need that these constructions do not depend on the underlying field $\KK$.
	Finally, for a finite poset $P$ we set
	\[ \tilde{H}(P; \KK)(z) := \sum_{i \geq -1} \dim_\KK(\tilde{H}_i(P; \KK))z^i, \]
	where $\tilde{H}^i(P; \KK)$ is the homology of the order complex of $P$.
	Theorem 3 of \cite{Berglund2006} asserts that $\KK[\Delta]$ is Golod if and only if for each $m \in L_I$, it holds that
	\begin{equation}\label{eq:thm3}
		\tilde{H}((\hat{0}, m)_{L_I}; \KK)(z) = \sum_{\substack{S \in K(M({\leq m}))\\ m_S = m}} (-z)^{c(S)-1}\tilde{H}((\hat{0}, S)_{K(M({\leq m}))}; \KK)(z),
	\end{equation}
	where $(\hat{0}, m)_{L_I}$ denotes the lower interval defined by $m$ in $L_I$, and similarly $(\hat{0}, S)_{K(M({\leq m}))}$ is the lower interval defined by $S$ in $K(M({\leq m}))$.

	Now we  observe that both sides of \eqref{eq:thm3} depend on $\KK$ only via the homology of some simplicial complexes.
	But for any simplicial complex $\Gamma$, $\tilde{H}_i(\Gamma; \KK)$ depends only on the characteristic of $\KK$.
	Moreover, it holds that 
	\[ \dim_{\FF_p} \tilde{H}_i(\Gamma; \FF_p) =  \dim_{\QQ} \tilde{H}_i(\Gamma; \QQ) \]
	 for all but finitely many primes $p$.
	Thus, the set of characteristics in which \eqref{eq:thm3} holds is either finite or it has a finite complement, and these two cases can be distinguished by considering it over $\QQ$.
\end{proof}

\section{Two explicit examples}\label{sec:example}
\newcommand{\frh}{f_{\mathrm{RH}}}
\newcommand{\flh}{f_{\mathrm{LH}}}
We give two explicit examples of simplicial complexes which show the phenomena observed in the last section.
In particular, we evaluate the two sides of \eqref{eq:ps} in these two examples.
For this, we define
\[ \frh := \sum_{i\geq 1}\sum_{j \geq 0} \dim_\KK (\Tor^S_i(A,\KK)_j) t^{i} x^j, \]
so the right-hand side of \eqref{eq:ps} is given by
\[ \frac{(1+t x)^n}{1-t \frh}. \]
The polynomial $\frh$ can easily be determined from the graded Betti numbers of $A$, which we computed in our examples with \texttt{Macaulay2} \cite{M2}.
For the left-hand side of \eqref{eq:ps}, it follows from Theorem 1 of \cite{Berglund2006} (see also \cite{back}) that it is also of the form
\[ \frac{(1+t x)^n}{1-t \flh} \]
with some polynomial $\flh$ in $t$ and $x$.
To compute it in our examples, we evaluated the formula (9) of \cite{Berglund2006} using a computer.

\begin{example}
Let $\Delta$ be the simplicial complex with the facets
\begin{center}\begin{tabular}{ccccc}
	$124$&$235$&$341$&$452$&$513$ \\
	$12a$&$23a$&$34a$&$45a$&$51a$ \\
	$12b$&$23b$&$34b$&$45b$&$51b$
\end{tabular}\end{center}
on the vertex set $V = \set{1,2,3,4,5,a,b}$.
The generators of the corresponding Stanley-Reisner ideal are
\begin{center}\begin{tabular}{ccccc}
$x_1x_2x_3$& $x_2x_3x_4$& $x_3x_4x_5$& $x_4x_5x_1$& $x_5x_1x_2$\\
$x_1x_3x_a$& $x_1x_4x_a$& $x_2x_4x_a$& $x_2x_5x_a$& $x_5x_3x_a$\\
$x_1x_3x_b$& $x_1x_4x_b$& $x_2x_4x_b$& $x_2x_5x_b$& $x_5x_3x_b$\\
$x_ax_b$ 
\end{tabular}\end{center}
Geometrically, $\Delta$ is a M\"obius strip with two $2$-balls glued along its boundary.
Using a similar argument as in the proof of part (1) of \Cref{thm:main}, one shows that $\KK[\Delta]$ is Golod if and only if $\Char \KK = 2$,
where the ``critical'' sets are $I = \set{a,b}$ and $J = \set{1,\dotsc,5}$.
Note that the $1$-skeleton of $\Delta$ is already chordal, so we do not need to add additional edges as in the proof of \Cref{thm:main}.
For $\KK = \QQ$, we computed that
\begin{align*}
\frh &= (x^2 + 15x^3)t + 35x^4t^2 + 26x^5t^3 + (5x^6 + x^7)t^4,  \\
\flh &= (x^2 + 15x^3)t + 35x^4t^2 + 26x^5t^3 + 5x^6t^4 - x^7t^5.
\end{align*}
As expected, there is a strict inequality in \eqref{eq:ps}.
For $\KK = \FF_2$, the ring is Golod, so the equality in \eqref{eq:ps} is attained. In this case, it holds that
\begin{align*}
\frh = \flh = (x^2 + 15x^3)t + 35x^4t^2 + (26x^5+2x^6)t^3 + (7x^6 + 2x^7)t^4+x^7t^5
\end{align*}
Note that in this example both sides of \eqref{eq:ps} depend in the field.
\end{example}

\begin{example}
Let $\Delta$ be the simplicial complex with the facets
\begin{center}\begin{tabular}{ccccc}
	$124$&$235$&$341$&$452$&$513$ \\
	$12a$&$23a$&$34a$&$45a$&  \\
	$51b$&$1ab$&$a5b$& & 
\end{tabular}\end{center}
on the vertex set $V = \set{1,2,3,4,5,a,b}$.
The generators of the corresponding Stanley-Reisner ideal are
\begin{center}\begin{tabular}{ccccc}
$x_1x_2x_3$& $x_2x_3x_4$& $x_3x_4x_5$& $x_4x_5x_1$& $x_5x_1x_2$\\
$x_1x_3x_a$& $x_1x_4x_a$& $x_2x_4x_a$& $x_2x_5x_a$& $x_5x_3x_a$\\
$x_5x_1x_a$& $x_2x_b$& $x_3x_b$& $x_4x_b$ %-- Extraecke b
\end{tabular}\end{center}
This is a triangulation of the real projective plane, which is obtained from the usual $6$-vertex triangulation by subdividing the $2$-cell $51a$.
It can be shown similar as in the proof of part (2) of \Cref{thm:main} that $\KK[\Delta]$ is Golod if and only if $\Char \KK \neq 2$.
Here, the ``critical'' sets are $I = \set{2,3,4,b}$ and $J = \set{5,1,a}$.

Again, we provide some numerical data for completeness.
For $\KK = \QQ$, the ring is Golod and it holds that
\begin{align*}
\frh = \flh = (3x^2 + 11x^3)t + (3x^3 + 28x^4)t^2 + (x^4 + 24x^5)t^3 + 7x^6t^4.
\end{align*}
For $\KK = \FF_2$, it holds that
\begin{align*}
\frh = (3x^2 + 11x^3)t + (3x^3 + 28x^4)t^2 + (x^4 + 24x^5)t^3 + (7x^6+x^7)t^4 + x^7t^5,
\end{align*}
while $\flh$ is the same as over $\QQ$.
So in this example, somewhat surprisingly, only one side of \eqref{eq:ps} depends on the field.
\end{example}

Apart from the equation \eqref{eq:ps}, the triviality of the Massey products on $H_*(K_{\KK[\Delta]})$ and \cite[Theorem 3]{Berglund2006}, there is a further well-known characterizations of Golod rings.
Namely, a ring $A$ is Golod if and only if its homotopy Lie algebra $\pi^{\geq 2}(A)$ is a free (graded) Lie algebra, cf. Chapter 10 of \cite{Avra}.
It might be instructive to study the structure of the homotopy Lie algebra in these examples.

\section{Concluding remarks}

\subsection{Skeleta and higher chordality}
In this section, we give two consequences of \Cref{lem:complete} which we consider to be of independent interest.
In the sequel, $\Delta$ is always a simplicial complex.
The first one is the following corollary.
\begin{corollary}
	The map $\phi_i^{I,J}$ depends only on the $i$-skeleton of $\Delta$.
\end{corollary}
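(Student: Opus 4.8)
The plan is to exhibit $\phi_i^{I,J}$ as the map induced by a cycle-level homomorphism that visibly involves only faces of dimension at most $i$. Write $\Delta^{(i)}$ for the $i$-skeleton of $\Delta$. First I would record two elementary observations. The group of $i$-cycles $Z_i := \ker\bigl(\partial\colon C_i(\restr{\Delta}{I\cup J};\KK)\to C_{i-1}(\restr{\Delta}{I\cup J};\KK)\bigr)$ is determined by the $i$-faces of $\restr{\Delta}{I\cup J}$ together with their boundaries, hence depends only on $\Delta^{(i)}$. For the target, the K\"unneth formula for joins gives
\[ \Hi{i}{\restr{\Delta}{I}*\restr{\Delta}{J}} \cong \bigoplus_{p+q=i-1}\Hi{p}{\restr{\Delta}{I}}\otimes_\KK \Hi{q}{\restr{\Delta}{J}}, \]
where the sum runs over $p,q\geq 0$, since both restrictions are non-empty. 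Every summand has $p,q\leq i-1$, so each $\Hi{p}{\restr{\Delta}{I}}$ and $\Hi{q}{\restr{\Delta}{J}}$ is computed from faces of dimension at most $i$; thus the target likewise depends only on $\Delta^{(i)}$.

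Next I would assemble the map. Since every $i$-face of $\restr{\Delta}{I\cup J}$ is again a face of $\restr{\Delta}{I}*\restr{\Delta}{J}$, an $i$-cycle $c\in Z_i$ is also a cycle in the join, and sending $c$ to its homology class there defines a homomorphism $\bar\phi\colon Z_i \to \Hi{i}{\restr{\Delta}{I}*\restr{\Delta}{J}}$. By the two observations above, $\bar\phi$ depends only on $\Delta^{(i)}$. It then remains to see that $\phi_i^{I,J}$ is induced by $\bar\phi$. The map $\phi_i^{I,J}$ is defined on the quotient $\Hi{i}{\restr{\Delta}{I\cup J}} = Z_i / B_i$, where $B_i$ is the image of $\partial\colon C_{i+1}(\restr{\Delta}{I\cup J};\KK)\to C_i(\restr{\Delta}{I\cup J};\KK)$, and $\bar\phi$ factors through this quotient precisely because it annihilates $B_i$: each generator of $B_i$ is the boundary $\partial\tau$ of an $(i+1)$-face $\tau$, which is a complete $i$-cycle, so $\bar\phi(\partial\tau)=0$ by \Cref{lem:complete}(1). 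Hence $\phi_i^{I,J}$ is exactly the map induced by $\bar\phi$ on $Z_i/B_i$.

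Putting these together yields the claim: the source cycle group $Z_i$, the target, and the cycle-level map $\bar\phi$ are all functions of $\Delta^{(i)}$ alone, and $\phi_i^{I,J}$ is obtained from $\bar\phi$ by passing to the quotient by $B_i$. For two complexes sharing an $i$-skeleton the subspaces $B_i$ may differ, but both lie in $\ker\bar\phi$, so the induced maps agree; in particular the image of $\phi_i^{I,J}$, and whether it vanishes, are determined by $\Delta^{(i)}$. I expect the only genuine subtlety to be this last point: the source homology $\Hi{i}{\restr{\Delta}{I\cup J}}$ itself truly depends on the $(i+1)$-faces of $\Delta$ (filling a hole changes it), so one cannot argue naively that the whole map is unaltered. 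The crux is \Cref{lem:complete}(1), which guarantees that the extra relations introduced by $(i+1)$-faces already lie in the kernel of the cycle-level map and therefore leave $\phi_i^{I,J}$ unaffected.
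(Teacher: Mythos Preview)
Your argument is correct and hinges on the same key observation as the paper's proof, namely \Cref{lem:complete}(1): the boundaries of $(i+1)$-faces are complete $i$-cycles and hence already lie in the kernel of the cycle-level map, so passing from the $i$-skeleton to $\Delta$ only quotients the source by elements that map to zero. The paper states this in three lines without separating source and target; your version is more explicit, in particular your K\"unneth argument that the target $\Hi{i}{\restr{\Delta}{I}*\restr{\Delta}{J}}$ itself depends only on the $i$-skeleton (since $p,q\le i-1$) is a point the paper leaves implicit, and your final remark that the source homology genuinely changes while the induced map does not is exactly the right way to interpret the statement.
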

\begin{proof}
	It is clear that the map depends only on the $(i+1)$-skeleton of $\Delta$.
	Adding $(i+1)$-dimensional simplices to $\Delta$ only turns complete $i$-cycles into boundaries.
	By \Cref{lem:complete}, all complete $i$-cycles lie in the kernel of $\phi_i^{I,J}$, so this does not affect the map.
\end{proof}

\begin{remark}
	Based on the preceding corollary, one might be tempted to conjecture that the map $\phi_i^{I,J}$ depends only in the \emph{pure} $i$-skeleton of $\Delta$. But this is false by the following counterexample.
	
	Consider the join of an empty triangle with an $S^0$. This complex is not Golod, as it is a join (or Gorenstein*).
	But the complex $\Delta$ obtained by adding an edge between the two "poles" is Golod: If the two poles are $I$ and the triangle is $J$, then $\restr{\Delta}{I}$ is contractible, so $\restr{\Delta}{I} * \restr{\Delta}{J}$ is as contractible as well.
	All proper restrictions of $\Delta$ have no second homology, and the $1$-skeleton is chordal.
	Hence $\Delta$ is Golod.
\end{remark}

Adiprasito, Nevo and Samper define in \cite{ANS} several high-dimensional extensions of the notion of a chordal graph.
In particular, they define a simplicial complex to be \emph{decomposition $k$-chordal}, if every $k$-cycle $z$ can be written as a sum of complete $k$-cycles $(z_i)$, such the vertices of each $(z_i)$ are also vertices of $z$.
The sufficiency of \Cref{prop:chordal} extends to this setting:
\begin{proposition}
	If $\Delta$ is a decomposition $k$-chordal simplicial complex,
	then $\phi_k^{I,J} = 0$ for all non-empty disjoint subsets $I,J \subset V$.
	In particular, if $\Delta$ is decomposition $k$-chordal for all $k$, then the product on $H_*(K_{\KK[\Delta]})$ is trivial.
\end{proposition}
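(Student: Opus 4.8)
The plan is to reduce everything to \Cref{lem:complete}(1) together with the linearity of $\phi_k^{I,J}$. First I would fix non-empty disjoint $I,J \subset V$ and an arbitrary class in $\Hi{k}{\restr{\Delta}{I \cup J}}$, represented by some $k$-cycle $z \in C_k(\restr{\Delta}{I \cup J};\KK)$. Since $\restr{\Delta}{I \cup J}$ is a subcomplex of $\Delta$, this $z$ is also a $k$-cycle of $\Delta$, so the hypothesis of decomposition $k$-chordality applies to it and produces a decomposition $z = \sum_i z_i$ into complete $k$-cycles $z_i$ whose vertices are vertices of $z$.

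The crux of the argument --- and really its only subtle point --- is to transfer this decomposition from $\Delta$ back to the restriction $\restr{\Delta}{I \cup J}$. Because $z$ is supported on $\restr{\Delta}{I \cup J}$, all of its vertices lie in $I \cup J$; by the vertex condition built into the definition of decomposition $k$-chordality, the same holds for every $z_i$. Since each $z_i$ is moreover a chain of $\Delta$, its faces are faces of $\Delta$ with all vertices in $I \cup J$, so each $z_i$ already lies in $C_k(\restr{\Delta}{I \cup J};\KK)$ and is a complete $k$-cycle there. I expect this to be where one must be careful: without the vertex condition one could not guarantee that the summands remain inside the restriction, and the whole argument would break down.

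Once this is established, the conclusion is immediate. By \Cref{lem:complete}(1) each complete cycle $z_i$ satisfies $\phi_k^{I,J}(z_i) = 0$, and since homology classes add and $\phi_k^{I,J}$ is linear,
\[
	\phi_k^{I,J}(z) = \sum_i \phi_k^{I,J}(z_i) = 0.
\]
As the class was arbitrary, $\phi_k^{I,J} = 0$ for all non-empty disjoint $I,J$, which is the first claim. For the final assertion I would simply invoke \Cref{prop:hom}: if $\Delta$ is decomposition $k$-chordal for every $k$, then $\phi_k^{I,J}$ vanishes for all $k$ and all non-empty disjoint $I,J$, so the product on $H_*(K_{\KK[\Delta]})$ is trivial.
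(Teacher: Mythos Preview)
Your argument is correct and matches the paper's approach; the paper's proof is simply the one-line remark ``This is immediate from \Cref{lem:complete},'' and you have spelled out exactly the details that make this immediacy work, in particular the use of the vertex condition to keep the complete summands inside $\restr{\Delta}{I\cup J}$.
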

\begin{proof}
	This is immediate from \Cref{lem:complete}.
\end{proof}

\subsection{Degree bounds}

By \Cref{prop:hom}, $\KK[\Delta]$ is Golod if the maps $\phi_i^{I,J}$ vanish for all $i \in \NN$.
It is clear that one only has to consider $i \leq \dim \Delta$.
Moreover, it is in fact sufficient to consider $i \leq \reg \KK[\Delta] - 1$, where $\reg \KK[\Delta]$ denotes the Castelnuovo-Mumford regularity.
This is immediate from Hochster's formula, which implies that 
\[ \reg \KK[\Delta] = \max\set{j \with \Hi{j-1}{\restr{\Delta}{I}} \neq 0 \text{ for some }I \subseteq V}. \]
It follows from \cite[Theorem 6.5]{BJ07} that if $\Delta$ is flag, then the vanishing of $\phi_1^{I,J}$ (for all $I,J$) is sufficient for $\KK[\Delta]$ to be Golod.
An optimistic generalization of this would be the following assertion:
If $\Delta$ has no minimal non-faces of dimension $\geq k$ and $\phi_i^{I,J}=0$ for all $i \leq k$, then $\KK[\Delta]$ is Golod.
However, this can easily seen to be false. An easy example is the join of two boundaries of $(k-1)$-simplices.
Instead we ask we following question, which is motivated by an analogous result in \cite{ANS}:
\begin{question}
Let $\Delta$ be a simplicial complex with vertex set $V$.
Assume that $\Delta$ has no minimal non-faces of dimension $\geq k$,
and assume further that $\phi_i^{I,J} = 0$ for all $i \leq 2k-1$ and all non-empty disjoint subsets $I,J \subseteq V$.
Is $\KK[\Delta]$ Golod?
\end{question}

\section*{Acknowledgment}
The author thanks Sean Tilson for many inspiring discussions and Volkmar Welker for suggesting the topic of the present note.
Moreover, the author wishes to thank Yi-Huang Shen and the anonymous referee for several helpful comments.
\bibliographystyle{alpha}
\bibliography{Golod}

\end{document}